\theoremstyle{plain}
\newtheorem{theorem}{Theorem}[section]
\newtheorem{lemma}{Lemma}[section]
\def \R {{\mathbb {R}}}
\title[Multiplicity of solutions for the Minkowski-curvature equation]{Multiplicity of solutions for the Minkowski-curvature equation\\ via shooting method}
\author[A. Boscaggin]{Alberto Boscaggin}
\address{Alberto Boscaggin\newline\indent 
Dipartimento di Matematica ``Giuseppe Peano''
\newline\indent
Universit\`a di Torino
\newline\indent
via Carlo Alberto 10, 10123 Torino, Italia}
\email{alberto.boscaggin@unito.it}
\author[F. Colasuonno]{Francesca Colasuonno}
\address{Francesca Colasuonno\newline\indent
Dipartimento di Matematica ``Giuseppe Peano''
\newline\indent
Universit\`a di Torino
\newline\indent
via Carlo Alberto 10, 10123 Torino, Italia}
\email{francesca.colasuonno@unito.it}
\author[B. Noris]{Benedetta Noris}
\address{Benedetta Noris
\newline \indent Laboratoire Ami\'enois de Math\'ematique Fondamentale et Appliqu\'ee\newline\indent
Universit\'e de Picardie Jules Verne\newline\indent
33 rue Saint-Leu, 80039 AMIENS, France}
\email{benedetta.noris@u-picardie.fr}
\thanks{ 
}
\begin{document}

\maketitle

%

\begin{abstract}
{In this paper we prove the existence and the multiplicity of radial positive oscillatory solutions for a nonlinear problem governed by the mean curvature operator in the Lorentz-Minkowski space. The problem is set in a ball $B_R$ of $\mathbb R^N$ and is subject to Neumann boundary conditions. The main tool used is the shooting method for ODEs.}

\medskip \noindent {\sc{Sunto.}} 
{In questo lavoro dimostriamo esistenza e molteplicit\`a di soluzioni oscillanti, radiali e positive che oscillano di un problema non-lineare governato dall'operatore di curvatura media nello spazio di Lorentz-Minkowski. Il problema \`e ambientato in una palla $B_R$ di $\mathbb R^N$ ed è soggetto a condizioni di Neumann al bordo. Il principale strumento usato \`e il metodo di {\it shooting} per le EDO.}

\medskip \noindent {\sc{2010 MSC.}} 35J62, 35B05, 35A24, 35B09, 34B18.

 \noindent {\sc{Keywords.}} Lorentz-Minkowski mean curvature operator, Shooting method, 
Existence and multiplicity, Oscillatory solutions, Neumann boundary conditions.
\end{abstract}

\maketitle

\section{Introduction}
We consider the following Neumann problem 
\begin{equation}\label{eq:main}
\left\{
\begin{array}{ll}
\vspace{0.1cm}
\displaystyle{-\mathop{\rm div}\left(\frac{\nabla u}{\sqrt{1-|\nabla u|^2}}\right) = f(u)} & \mbox{ in } B_R \\
\vspace{0.1cm}
u > 0,\quad u \mbox{ radial} & \mbox{ in } B_R \\
\partial_\nu u = 0 & \mbox{ on } \partial B_R, \\
\end{array}
\right.
\end{equation}
where $\nu$ is the outer unit normal of $\partial B_R$ and $B_R\subset\mathbb R^N$ ($N\geq1$) the ball of radius $R$ centered at the origin. Since we are interested in radial solutions, with the usual abuse of notation, we will often write $u(x) = u(r)$ for $r = \vert x \vert$.

The operator $-\mathop{\rm div}\left(\frac{\nabla u}{\sqrt{1-|\nabla u|^2}}\right)$ that governs the equation is usually referred to as mean curvature operator in the Lorentz-Minkowski space. It naturally arises in several problems of Differential Geometry and General Relativity \cite{BaSi-8283,Ecker,Ge-83}, and also in the Born-Infeld theory of Electromagnetism \cite{BI,BoCoFo-pp,BdAP-16,BoIa-pp}. 
In the last decades, the interest in problems involving the Minkowski-curvature operator has increased also in the field of Nonlinear Analysis. Existence and multiplicity results for this class of problems have been proved both in bounded and unbounded domains, both under Dirichlet boundary conditions and under Neumann boundary conditions (see, among others, \cite{Az-14,Az-16,BeJeMa-09,BeJeTo-13,BeJeTo-13a,BeMa-07,BoFe-pp,BoGa-ccm,CoCoObOm-12,CoelhoCorsatoRivetti,CoObOmRi-13,DW,Ma-13,ABF3} and the references therein). In particular, in \cite{ABF3}, under suitable assumptions on $f$, we proved the existence of pairs of oscillatory solutions of \eqref{eq:main}, via shooting method. The aim of the present paper is to cover a class of nonlinearities that was not allowed in \cite{ABF3}.

We assume that the nonlinearity $f$ satisfies the following hypotheses
\begin{itemize}
\item[$(f_\textrm{reg})$] $f \in \mathcal{C}^1([0,+\infty))$;
\item[$(f_\textrm{eq})$] $f(0) = f(1) = 0$, $f(s) < 0$ for $0 < s < 1$ and $f(s) > 0$ for $s > 1$;
\item[$(f_1)$] $f'(1)=0$.
\end{itemize}

We observe that assumption $(f_\textrm{eq})$ is motivated by the fact that, under Neumann boundary conditions, no positive solution to \eqref{eq:main} exists if $f$ has constant sign. 
Therefore, we assume that $f$ vanishes at some point ($s=1$ without loss of generality) and we note that, as a consequence, problem \eqref{eq:main} always admits the constant solution $u\equiv1$. We look for non-constant solutions.
 
An example of admissible nonlinearity $f$ satisfying $(f_\textrm{reg})$, $(f_\textrm{eq})$ and $(f_1)$ is the function $f:\,[0,\infty)\to\mathbb R$ defined as 
$$
f(s) = s(s-1)^3.
$$
Before stating the main result of this paper, we recall the multiplicity result proved in \cite{ABF3}.\smallskip

\noindent{\bf Theorem} (Theorem 1.1 of \cite{ABF3}) {\it Let $f$ satisfy $(f_\textrm{reg})$, $(f_\textrm{eq})$ and
\begin{itemize}
\item[$(f_1)'$] $f'(1) > \lambda_{k+1}^{\textnormal{rad}}$ for some integer $k \geq 1$,
\end{itemize}
where $\lambda_{k+1}^{\mathrm{rad}}$ is the $k$-th non-zero radial eigenvalue of the Laplacian in $B_R$ with Neumann boundary conditions. 
Then there exist at least $2k$ distinct non-constant radial solutions $u_1^\pm,\ldots,u_{k}^\pm$ to \eqref{eq:main}. 
Moreover, we have
\begin{itemize}
\item[(i)] $u_j^+(0)>1$ for every $j=1,\ldots,k$;
\item[(ii)] $u_j^-(0)<1$ for every $j=1,\ldots,k$;
\item[(ii)] $u_j^-(r)-1$ and $u_{k+1-j}^+(r)-1$ have exactly $j$ zeros for $r\in(0,R)$, for every $j=1,\ldots,k$.
\end{itemize}
}

\smallskip

In the present paper, we prove the following theorem. 

\begin{theorem}\label{thm:main}
Let $f$ satisfy $(f_\textrm{reg})$, $(f_\textrm{eq})$ and $(f_1)$. Then, for every integer $k \geq 1$ there exists a threshold radius $R_k^*>0$ such that, if $R\ge R_k^*$, problem \eqref{eq:main} admits $4k$ distinct non-constant solutions $u_1^{\pm},\ldots,u_{2k}^{\pm}$. 
Moreover, we have
\begin{itemize}
\item[(i)] $u_j^{+}(0)>1$ for every $j=1,\ldots,2k$;
\item[(ii)] $u_j^{-}(0)<1$ for every $j=1,\ldots,2k$;
\item[(ii)] $u_j^{\pm}(r)-1$ and $u_{2k+1-j}^{\pm}(r)-1$ have exactly $j$ zeros for $r\in(0,R)$, for every $j=1,\ldots,k$.
\end{itemize}
\end{theorem}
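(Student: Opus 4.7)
The plan is to apply a shooting argument analogous to that of \cite{ABF3}, adapted to the degenerate setting $f'(1)=0$ in which the linearization at the equilibrium $u\equiv 1$ provides no rotation. Writing \eqref{eq:main} in radial form as
\begin{equation*}
\bigl(r^{N-1}\phi(u')\bigr)'+r^{N-1}f(u)=0,\qquad \phi(v):=\frac{v}{\sqrt{1-v^2}},
\end{equation*}
with $u'(0)=u'(R)=0$, I would first collect from \cite{ABF3} the standard facts on the associated Cauchy problem with $u(0)=d$, $u'(0)=0$: for every $d>0$ the solution $u(\cdot;d)$ is uniquely defined on $[0,\infty)$, satisfies $|u'|<1$, and depends continuously on $d$. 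These rely on the energy
\[
E(r)=\frac{1}{\sqrt{1-u'(r)^2}}-1+F(u(r)),\qquad F(s):=\int_1^s f(t)\,dt,
\]
which is non-increasing for $N\ge 2$ (conserved for $N=1$) because of the damping term $(N-1)\phi(u')/r$.

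The next step is to count oscillations via modified polar coordinates in the $(u-1,\phi(u'))$-plane,
\begin{equation*}
u(r)-1=\rho(r;d)\cos\theta(r;d),\qquad \phi(u'(r))=-\rho(r;d)\sin\theta(r;d),
\end{equation*}
with $\theta(0;d)=0$ if $d>1$ and $\theta(0;d)=\pi$ if $d<1$. A direct computation gives
\[
\rho^2\theta'=(u-1)f(u)+u'\phi(u')+\frac{N-1}{r}(u-1)\phi(u'),
\]
the first two summands being non-negative by $(f_{\mathrm{eq}})$ and by the monotonicity of $\phi$. Thus $\theta(\cdot;d)$ is globally defined along every non-constant orbit, Neumann solutions of \eqref{eq:main} correspond exactly to shooting values with $\Theta(d):=\theta(R;d)-\theta(0;d)\in\pi\Z$, and $\Theta(d)/\pi$ counts the zeros of $u(\cdot;d)-1$ in $(0,R)$.

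The multiplicity is extracted from the continuous function $d\mapsto\Theta(d)$ on $(1,+\infty)$ (the case $(0,1)$ being symmetric) by proving: (a) $\Theta(d)\to 0$ as $d\to 1^+$, because the linearization of the system around $u\equiv 1$ reduces, in view of $f'(1)=0$, to the non-oscillatory equation $u''+(N-1)u'/r=0$; (b) $\Theta(d)<\pi$ for $d>1+R$, since $|u'|<1$ forces $u(r)>d-r>1$ on $[0,R]$, so that $u-1$ has no zeros; (c) for $R\ge R_k^*$, there exists $d^*\in(1,1+R)$ with $\Theta(d^*)\ge k\pi$. Combining these with the intermediate value theorem provides, for each $j\in\{1,\ldots,k\}$, at least two distinct $d$'s in $(1,+\infty)$ with $\Theta(d)=j\pi$, hence the $2k$ solutions $u_1^+,\ldots,u_{2k}^+$ with the prescribed zero counts; the dual argument on $(0,1)$ produces $u_1^-,\ldots,u_{2k}^-$.

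The main obstacle is (c). In \cite{ABF3}, the analogous lower bound for $d$ close to $1$ came from the spectral condition $f'(1)>\lambda_{k+1}^{\mathrm{rad}}$, which is no longer available. I would instead choose $d^*$ bounded away from both $1$ and $+\infty$ so that, along the orbit $u(\cdot;d^*)$, the squared distance $\rho^2=(u-1)^2+\phi(u')^2$ stays uniformly bounded below by a positive constant independent of $R$; this is possible because for $r$ larger than some fixed $r_0$ the damping $(N-1)/r$ is small and the system is close to the conservative autonomous one ($N=1$), whose orbits around $(1,0)$ are closed with positive minimum distance and finite minimal period. With $\rho^2$ thus controlled, the formula for $\rho^2\theta'$ yields a strictly positive lower bound for $\theta'(r)$ on $[r_0,R]$, and integrating this bound gives $\Theta(d^*)\ge k\pi$ as soon as $R\ge R_k^*$, completing the argument.
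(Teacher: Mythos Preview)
Your overall shooting strategy and the identification of the three ingredients (a), (b), (c) match the paper exactly, and your arguments for (a) and (b) are essentially the paper's Lemma~\ref{lem:d_close1} and Lemma~\ref{le:d-large}. One technical difference worth noting is your choice of polar coordinates in the $(u-1,\phi(u'))$-plane rather than the paper's $(u-1,r^{N-1}\phi(u'))$-plane: in your coordinates the damping term $\tfrac{N-1}{r}(u-1)\phi(u')$ appears in $\rho^2\theta'$ with indefinite sign, so $\theta$ is not monotone. This is not fatal (crossings of the coordinate axes are still one-directional), but it makes all your later estimates messier, and it is the reason the paper works with $v=r^{N-1}\phi(u')$ instead.

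The genuine gap is in (c). Your claim that, for a suitable $d^*$ bounded away from $1$ and from $+\infty$, the quantity $\rho^2=(u-1)^2+\phi(u')^2$ stays bounded below by a positive constant \emph{independent of $R$} is not established, and for $N\ge 2$ it is in fact false. From the energy identity $E'=-\tfrac{N-1}{r}\,u'\phi(u')$ you see that the total dissipation over $[r_0,R]$ is of order $\log(R/r_0)$ and hence unbounded as $R\to\infty$; the orbit therefore spirals in toward $(1,0)$. Once $\rho$ is small, the hypothesis $f'(1)=0$ forces $(u-1)f(u)=o((u-1)^2)$ and the autonomous period diverges, so the ``strictly positive lower bound for $\theta'$'' you invoke disappears exactly in the regime where the argument needs it. Comparison with the $N=1$ conservative system is the right intuition, but it does not by itself control the interplay between the slow decay of the damping and the slow rotation near the degenerate center.

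The paper resolves precisely this point by importing a quantitative tool: the spiral-like-curves lemma of \cite{BoGa-ccm} (stated here as Lemma~\ref{lem:spirals}), which for any planar system sandwiched between two autonomous comparison systems produces a radius $\rho_k^*$ and a time length $\tau_k^*$ guaranteeing more than $k$ half-turns. To place the problem in that framework the paper first passes to an auxiliary system with truncated $\tilde f$ and a globally defined $\tilde\varphi$, so that two-sided bounds $a_i\le X,Y\le b_i$ hold on an interval $[r_0,R]$ with $r_0>0$ fixed; continuous dependence then yields initial data $(d_k^\pm)^*$ whose orbits reach radius $\rho_k^*$ at $r_0$, and a separate lemma (Lemma~\ref{le:Paux-to-P}) shows that oscillatory solutions of the auxiliary system solve the original one. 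Your sketch would need a device of this kind to turn the heuristic for (c) into a proof.
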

We compare now the two results. Firstly, we note that when $(f_1)'$ is in charge, $(f_1)$ is never satisfied; the prototype nonlinearity for Theorem 1.1 of \cite{ABF3} is $f(s)=s^{q-1}-s$, with $q>2+\lambda_{k+1}^{\textnormal{rad}}$. On the other hand, the two assumptions are clearly not complementary: the case $0<f'(1)\le \lambda_2^{\mathrm{rad}}$ is still left out. Actually, the reasoning for proving Theorem \ref{thm:main} does not require $f'(1)=0$, we could weaken the hypothesis $(f_1)$ into $0\le f'(1)< \lambda_2^{\mathrm{rad}}$. The only reason why we stated Theorem \ref{thm:main} under the stronger assumption $f'(1)=0$ is that, since $\lambda_2^{\mathrm{rad}}\searrow 0$ as $R\to\infty$, the hypothesis $R>R_k^*$ and $f'(1)< \lambda_2^{\mathrm{rad}}$ are in competition with each other, unless $f'(0)=0$, cf. also \cite[Remark 4.3]{ABF2}.
Secondly, the most evident difference between the two theorems is that, while in \cite{ABF3} we find $2k$ non-constant solutions sharing, in pairs, the same oscillatory behavior around the constant solution $u\equiv1$, in the present setting, we can find $4k$ non-constant solutions sharing, in groups of four, the same oscillatory behavior.  
A similar pattern of multiple solutions was found in \cite{ABF2} for a $p$-Laplacian Neumann problem with $1<p<2$, and, in the semilinear setting, for a Neumann Laplacian problem with a nonlinearity satisfying $(f_1)$. 

To explain where this difference originates, we need to briefly describe the technique used to prove the two theorems. As already mentioned, in both cases we use the shooting method for the equivalent ODE problem
\begin{equation}\label{eq:main_radial}
\begin{cases}
\left(r^{N-1} \frac{u'}{\sqrt{1-(u')^2}}\right)'+r^{N-1} f(u)=0 \qquad r\in (0,R)\\
u > 0 \\
u'(0)=u'(R)=0.
\end{cases}
\end{equation}
Namely, we rewrite the second-order equation in \eqref{eq:main_radial} as the equivalent first-order planar system
\begin{equation}\label{sys:intro}
\begin{cases}
&u' = \displaystyle{\frac{v}{r^{N-1}\sqrt{1+ (v/r^{N-1})^2}}},\smallskip\\
&v' = - r^{N-1} f(u),
\end{cases}
\end{equation}
coupled with the initial condition $(u(0),v(0)) = (d,0)$, 
and we look for values $d \in (0,+\infty) \setminus \{1\}$ such that the solution
$(u_d,v_d)$ satisfies $v_d(R) = 0$ (that is $u_d'(R)=0$, being $u_d$ ultimately a solution of \eqref{eq:main_radial}). Now, thanks to $(f_\textrm{eq})$, the solutions $(u_d,v_d)$, with $d\neq1$, of \eqref{sys:intro} turn clockwise around the equilibrium point $(1,0)$ in the phase plane, see Fig. \ref{fig:phase-plane}. Furthermore, the number of half-turns around such $(1,0)$ is exactly the number of zeros of $u_d-1$.
\begin{figure}[h]
\includegraphics[scale=1]{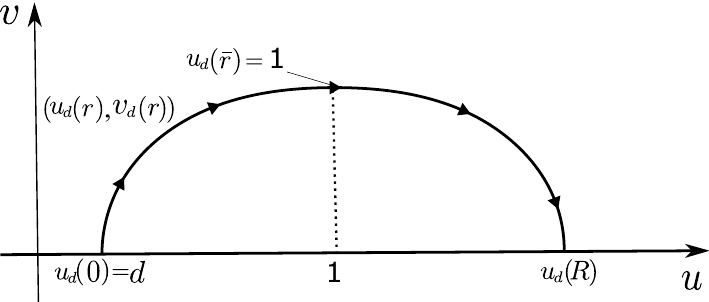}
\caption{The solution $(u_d,v_d)$ of \eqref{sys:intro} turns clockwise around $(1,0)$ in the phase plane.}\label{fig:phase-plane}
\end{figure}
Using condition $(f_1)$ or $(f_1)'$, it is possible to estimate the number of half-turns when the solution is shot from $d$ close to 1 (cf. Lemma \ref{lem:d_close1} below and \cite[Lemma 3.1]{ABF3}): \\
\centerline{$f'(1)<(\mbox{resp. }>)\,\lambda_{k+1}^{\mathrm{rad}}$\quad $\Longrightarrow$\quad $(u_d,v_d)$ performs less (resp. more) than $k$ half-turns.}
On the other hand, for $d=0$ the solution is constant ($u_0\equiv 0$) and so it performs zero half-turns around $(1,0)$. Finally, for $d$ large enough ($d\ge R+1$) the solution performs less than one half-turn, cf. Lemma \ref{le:d-large}.
Therefore, when $(f_1)'$ holds, we immediately have the multiplicity result and the precise oscillatory behavior using a continuity argument. Conversely, when $(f_1)$ holds, the situation is more involved, because the continuity argument, in general, does not ensure the existence of any non-constant solution. In this case, we adapt to the Neumann problem a technique used in \cite{BoGa-ccm} for a similar Dirichlet problem, to prove the existence, for every $k\in\mathbb N$ and for sufficiently large domains, of two initial data $(d^+_k)^*\in(1,R+1)$ and $(d^-_k)^*\in(0,1)$, such that the solutions of \eqref{sys:intro} shot from $(d^\pm_k)^*$ perform more than $k$ half-turns around $(1,0)$. This allows to use the continuity argument both on the left and on the right of each $(d^\pm_k)^*$, thus proving the existence of a double number of solutions with respect to the ones found under assumption $(f_1)'$, cf. Figg. \ref{fig:number-ht-1} and \ref{fig:number-ht-2}. 
\begin{figure}[h]
\includegraphics[scale=1]{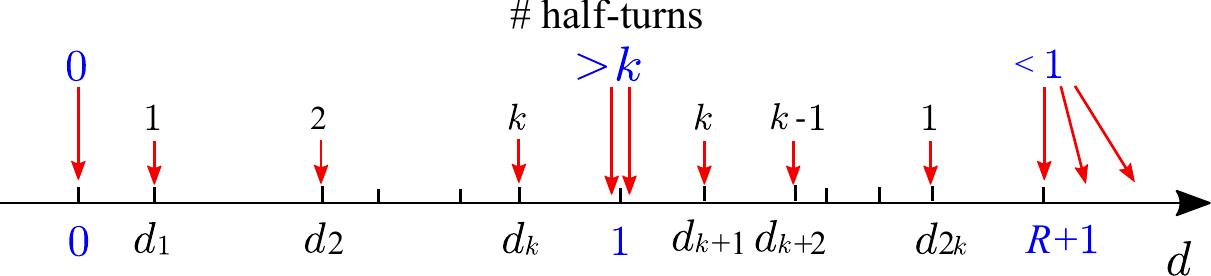}
\caption{Under the assumption $(f_1)'$: the number of half-turns around the point $(1,0)$ performed by the solution $(u_d,v_d)$ of \eqref{sys:intro}, when shot from different values of $d>0$.}
\label{fig:number-ht-1}
\end{figure}
\begin{figure}[h]
\includegraphics[scale=1]{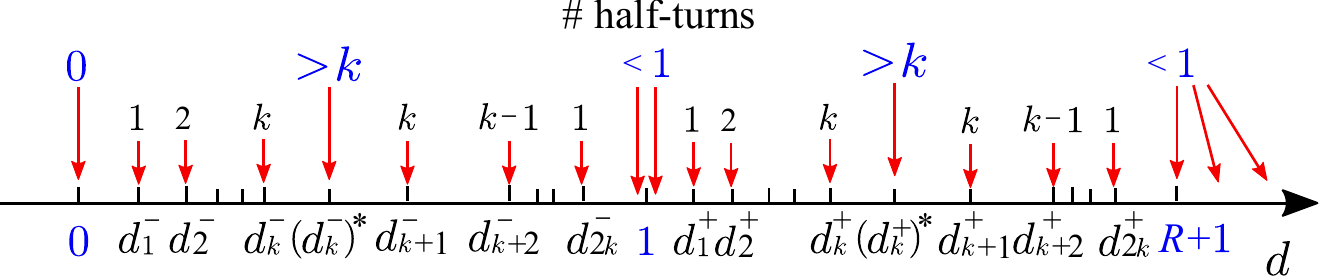}
\caption{Under the assumption $(f_1)$ and in domains $B_R$ sufficiently big: the number of half-turns around the point $(1,0)$ performed by the solution $(u_d,v_d)$ of \eqref{sys:intro}, when shot from different values of $d>0$.}
\label{fig:number-ht-2}
\end{figure}

We remark that all the results proved in this paper hold also in annular domains, where some proofs are simplified by the fact that the weight $r^{N-1}$ appearing in \eqref{sys:intro} is away from zero, cf. the proof of Theorem \ref{thm:main} and also \cite{ABF3}.

The plan of the paper is the following. In Section \ref{sec:2}, we prove that, if a solution $(u_d,v_d)$ of \eqref{sys:intro} is shot from some $d$ in a neighborhood of 1 or from some very large $d$, it performs less than one half-turn around $(1,0)$ in the phase plane. This result and its preliminary lemmas are essentially contained in \cite{ABF2,ABF3}. In Section \ref{sec:3}, we prove Theorem \ref{thm:main}, namely that, if the domain is sufficiently large, we can get as many oscillatory solutions as we want, and those solutions exhibit the same oscillatory behavior in groups of four. The results of Section \ref{sec:3} are inspired from the ones of \cite{BoGa-ccm}.

\section{The ``slow'' solutions of \eqref{sys:intro}}\label{sec:2}
For $f$ satisfying $(f_\textrm{reg})$, $(f_\textrm{eq})$ and $(f_1)$, let $\hat f$ denote its trivial continuous extension
\begin{equation}\label{eq:f_hat}
\hat f(s):=\begin{cases}
	f(s) \quad&\text{if } s \geq 0\\
	0 &\text{if } s < 0.
\end{cases}
\end{equation}
Let $\varphi(s):=\displaystyle{\frac{s}{\sqrt{1-s^2}}}$. We observe that $\varphi$ is invertible with inverse $\varphi^{-1}(t)=\displaystyle{\frac{t}{\sqrt{1+t^2}}}$,
and that
\begin{equation}\label{eq:inverse-phi}
|\varphi^{-1}(t)|<1 \qquad\text{for all } t\in\R.
\end{equation}
Since we are dealing with radial solutions, it is useful to consider the radial version of problem \eqref{eq:main}, with $f$ replaced by $\hat{f}$:
\begin{equation}\label{eq:radial}
\begin{cases}
(r^{N-1} \varphi(u'))'+r^{N-1}\hat f(u)=0 \qquad r\in (0,R)\\
u'(0)=u'(R)=0,
\end{cases}
\end{equation}
where the prime symbol $'$ denotes the derivative with respect to $r$. In view of the following maximum principle-type result, 
$u$ is a non-constant solution of \eqref{eq:main} if and only if $u$ is a non-constant solution of \eqref{eq:radial}.

\begin{lemma}[Lemma 2.3 of \cite{ABF3} and Lemma 2.1 of \cite{ABF}]\label{lem:hat_f} The function
$u$ is a radial solution of \eqref{eq:main} if and only if $u$ solves \eqref{eq:radial} and $u\not\equiv -C$ with $C\geq0$.
\end{lemma}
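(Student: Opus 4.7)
The plan is to establish the equivalence by first recording the trivial forward direction and then, for the converse, showing via a strong-maximum-principle argument that any admissible solution of \eqref{eq:radial} is strictly positive. The forward implication is immediate: if $u>0$ is a radial solution of \eqref{eq:main}, then $\hat f(u)=f(u)$ by \eqref{eq:f_hat}, so $u$ solves the equation in \eqref{eq:radial}; the Neumann conditions $u'(0)=u'(R)=0$ come from radial regularity at $r=0$ and from $\partial_\nu u=0$ on $\partial B_R$ respectively, and $u>0$ trivially excludes $u\equiv -C$ for every $C\ge 0$.

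For the converse, assume $u$ solves \eqref{eq:radial} with $u\not\equiv -C$ for every $C\ge 0$, and let us show $u>0$ on $[0,R]$ in two steps. \emph{First, $u\ge 0$.} Suppose for contradiction that the open set $A:=\{r\in[0,R]:u(r)<0\}$ is non-empty and let $(a,b)$ be one of its connected components. Since $\hat f(u)\equiv 0$ on $(a,b)$, integrating once gives $r^{N-1}\varphi(u'(r))\equiv c$ for some constant. If $0<a<b<R$, then $u(a)=u(b)=0$ with $u<0$ in between forces $u'(a)\le 0$ and $u'(b)\ge 0$, whence $c=0$; but then $u'\equiv 0$ makes $u$ constant and equal to $0$ on $[a,b]$, a contradiction. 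If $a=0$ or $b=R$, the Neumann condition (together with the continuity of $u'$ on $[0,R]$) still forces $c=0$, so $u$ is constant on $[a,b]$; this contradicts $u=0$ at the remaining finite endpoint, unless $(a,b)=(0,R)$, in which case $u\equiv -C$ with $C>0$, ruled out by hypothesis.

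\emph{Second, $u>0$.} I would use an energy-plus-Gronwall argument. Let $E(r):=H(u'(r))+F(u(r))$, with $H(s):=(1-s^2)^{-1/2}-1$ (chosen so that $H'(s)=s\varphi'(s)$) and $F(s):=\int_0^s f(\tau)\,d\tau$; a direct computation from the equation gives $E'(r)=-\frac{N-1}{r}u'(r)\varphi(u'(r))\le 0$ on $(0,R)$. If $u(r_0)=0$ for some $r_0\in[0,R]$, then $r_0$ is a minimum so $u'(r_0)=0$ (directly or by the Neumann condition), whence $E(r_0)=0$ and $E(r)\le 0$ for $r\ge r_0$. Combining $H(s)\ge s^2/2$ with the Taylor bound $-F(u)\le Cu^2$ near $u=0$ (which follows from $f(0)=0$ and $(f_\textrm{reg})$) yields $|u'|\le C'u$ in a right neighborhood of $r_0$; Gronwall then forces $u\equiv 0$ there, and a symmetric argument propagates this to $[0,R]$, contradicting $u\not\equiv 0$.

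The main obstacle is the second step when $r_0=0$ (or when propagating $u\equiv 0$ backward across $r=0$ for $N\ge 2$), because of the $1/r$ singularity in $E'$. I would handle this case separately by working directly with the integral form $r^{N-1}\varphi(u'(r))=-\int_0^r s^{N-1}\hat f(u(s))\,ds$, which gives $|\varphi(u'(r))|\le C\sup_{[0,r]}u$ for small $r$ and leads, after one more integration, to the same Gronwall-type estimate on $u$ near the origin.
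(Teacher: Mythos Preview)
The paper does not give its own proof here; the lemma is simply quoted from \cite{ABF3} and \cite{ABF}. Your argument is essentially along the natural lines (a maximum-principle computation for Step~1, then a uniqueness argument for Step~2), and Step~1 is fine, but there is one genuine gap in Step~2.

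After obtaining $u\equiv 0$ on a right neighbourhood of $r_0$ from $E(r)\le E(r_0)=0$ together with Gronwall, you invoke ``a symmetric argument'' to propagate $u\equiv 0$ to the left of $r_0$. The energy argument is \emph{not} symmetric: since $E$ is non-increasing, for $r<r_0$ you only get $E(r)\ge 0$, i.e.\ $H(u')\ge -F(u)$, which is the wrong inequality for the Gronwall step. So backward propagation via your energy function does not work as stated.

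The fix is immediate and in fact simplifies all of Step~2. Because $f\in C^1$ with $f(0)=0$, the extension $\hat f$ is locally Lipschitz on $\mathbb R$; hence, for $r$ bounded away from $0$, the planar system $u'=\varphi^{-1}(v/r^{N-1})$, $v'=-r^{N-1}\hat f(u)$ has locally Lipschitz right-hand side, and the equilibrium $(0,0)$ is the unique trajectory through $(u(r_0),v(r_0))=(0,0)$, both forward and backward in $r$. This already gives $u\equiv 0$ on $(0,R]$ (hence on $[0,R]$ by continuity) whenever $r_0>0$, with no energy or Gronwall needed. Your integral-representation argument from the last paragraph is then only required to start the propagation when $r_0=0$; alternatively one can simply invoke the uniqueness statement of Lemma~\ref{lem:uniqueness_Cauchy} with $d=0$, which is logically independent of the present lemma.
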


As described in the Introduction, we pass to the equivalent first-order planar system and we consider the associated Cauchy problem 
\begin{equation}\label{eq:shooting}
\begin{cases}
u'=\varphi^{-1}\left(\frac{v}{r^{N-1}}\right) \qquad & r\in (0,R)\\
v'=-r^{N-1} \hat f(u) \qquad & r\in (0,R) \\
u(0)=d, \quad v(0)=0
\end{cases}\qquad\qquad (d\ge 0).
\end{equation}

The following uniqueness, global continuability, continuous dependence from the initial data, and regularity result holds for \eqref{eq:shooting}.

\begin{lemma}[Lemma 2.1 of \cite{ABF3}]\label{lem:uniqueness_Cauchy}
For every $d\geq 0$, the local $W^{1,\infty}$ solution $(u_d,v_d)$ of \eqref{eq:shooting} is unique and can be defined on the whole $[0,R]$; moreover, $u_d$ is of class $C^2([0,R])$, with $u_d'(0) = 0$.

In addition, if $(d_n)\subset [0,+\infty)$ is such that $d_n\to d\in [0,+\infty)$ as $n\to+\infty$, then 
\begin{equation}\label{eq:dip_cont}
(u_{d_n}(r),v_{d_n}(r))\to (u_d(r),v_d(r))\quad \mbox{uniformly for }r\in [0,R],
\end{equation}
\begin{equation*}
u'_{d_n}(r)\to u'_d(r) \quad \mbox{uniformly for }r\in [0,R].
\end{equation*}
\end{lemma}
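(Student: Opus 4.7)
The main difficulty is the coordinate singularity at $r=0$ produced by the weight $r^{N-1}$, which prevents a direct application of standard ODE theory at the initial point. My plan is to recast \eqref{eq:shooting} as the integral system
\begin{equation*}
v(r) = -\int_0^r s^{N-1}\hat f(u(s))\,ds, \qquad u(r) = d + \int_0^r \varphi^{-1}\!\left(\frac{v(s)}{s^{N-1}}\right) ds,
\end{equation*}
and to exploit the fact that, as long as $u$ stays bounded on $[0,r]$, the first formula gives $|v(r)|\le Cr^N$, hence $|v(r)/r^{N-1}|\le Cr$. This removes the apparent singularity, extends $\varphi^{-1}(v/r^{N-1})$ continuously to the origin with value $0$, and makes the integral operator a contraction on $C([0,\varepsilon],\mathbb R^2)$ for $\varepsilon$ small, yielding local existence.

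Local uniqueness would follow from the same estimate sharpened: using that $\varphi^{-1}$ is globally Lipschitz with constant $1$ (as one reads off from the explicit formula and \eqref{eq:inverse-phi}) and that $\hat f$ is locally Lipschitz (by $(f_\textrm{reg})$ together with the zero extension below zero), two solutions $(u_i,v_i)$ with the same initial datum would satisfy
\begin{equation*}
\sup_{[0,r]}|u_1-u_2| \le \frac{Mr^2}{2N}\sup_{[0,r]}|u_1-u_2|,
\end{equation*}
forcing coincidence on a small right-neighborhood of $0$; classical Cauchy--Lipschitz theory then propagates uniqueness on every subsequent interval where the solutions are defined.

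Global continuability on $[0,R]$ is the easy step: the bound $|u'|<1$ from \eqref{eq:inverse-phi} confines $u_d$ to $[d-R,d+R]$, so $\hat f(u_d)$ stays bounded and $v_d$ cannot blow up; a standard continuation argument extends the solution to the whole interval. Continuous dependence on $d$ follows from Gronwall's inequality applied to the integral system, exactly as in the uniqueness estimate above, with the bound $|v_d(r)/r^{N-1}|\le Cr$ holding uniformly for $d$ in a bounded set. The uniform convergence of $u'_{d_n}=\varphi^{-1}(v_{d_n}(\cdot)/\cdot^{N-1})$ is then inherited from the $C^0$-convergence of $v_{d_n}$ and from the same uniform control near the origin.

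Finally, for $u_d \in C^2([0,R])$ together with $u'_d(0)=0$, I would refine the previous estimate to a Taylor expansion: since $\hat f(u_d(s))=\hat f(d)+O(s)$, one gets $v_d(r)=-\hat f(d)\,r^N/N+O(r^{N+1})$, so that $w_d(r):=v_d(r)/r^{N-1}=-\hat f(d)\,r/N+O(r^2)$ extends to a $C^1$ function on $[0,R]$. Consequently $u'_d=\varphi^{-1}\circ w_d$ is of class $C^1$, with $u'_d(0)=\varphi^{-1}(0)=0$ and $u''_d(0)=-\hat f(d)/N$; on $(0,R]$ the $C^2$ regularity comes directly from the equation. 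The principal obstacle throughout is essentially bookkeeping: every step reduces to controlling $v/r^{N-1}$ near $0$, and the workhorse estimate $|v(r)/r^{N-1}|\le Cr$, with its Taylor refinement, is what turns the singular Cauchy problem into an essentially regular one.
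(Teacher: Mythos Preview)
The paper does not supply a proof of this lemma: it is quoted verbatim as Lemma~2.1 of \cite{ABF3} and used as a black box, so there is no in-paper argument to compare your proposal against.

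That said, your outline is the standard and correct route for handling the weight $r^{N-1}$ at the origin. The key observation you isolate---that any local solution automatically satisfies $|v(r)|\le (M/N)r^{N}$, hence $|v(r)/r^{N-1}|\le (M/N)r$---is precisely what desingularizes the system, and your uniqueness inequality (with the Lipschitz constant of $\hat f$ in place of your $M$) is exactly the contraction estimate needed both for local well-posedness and, via Gronwall, for continuous dependence. The global continuation is immediate from $|u'|<1$, as you say. Your refinement $w_d(r)=v_d(r)/r^{N-1}=-\hat f(d)\,r/N+O(r^{2})$ does give $w_d\in C^{1}([0,R])$ (one checks $w_d'(r)=-\hat f(u_d(r))-(N-1)v_d(r)/r^{N}\to -\hat f(d)/N$ as $r\to 0^+$), and composing with the smooth $\varphi^{-1}$ yields $u_d\in C^{2}$ with $u_d'(0)=0$. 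One small point to keep straight: for the contraction argument and the Taylor step you implicitly use that $\hat f$ is locally Lipschitz also across $s=0$; this is fine since $f(0)=0$ and $f\in C^{1}$, but it is worth stating explicitly for the case $d=0$.
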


Thanks to the uniqueness stated in the previous lemma, we can pass to (clockwise) polar coordinates centered at $(1,0)$ for system \eqref{eq:shooting}:
\begin{equation}\label{eq:polar}
\begin{cases}
u(r)-1=\rho(r)\cos\theta(r) \\
v(r)=-\rho(r)\sin\theta(r)
\end{cases}\qquad \mbox{for }r\in [0,R].
\end{equation}
For $d\in [0,\infty)\setminus\{1\}$, if $(u_d,v_d)$ solves \eqref{eq:shooting}, the corresponding $(\theta_d,\rho_d)$ is such that $\theta_d$ satisfies the following differential equation in $(0,R)$,
\begin{equation}\label{eq:theta'}
\begin{aligned}
\theta_d'&=\frac{1}{\rho_d^2} \left[\varphi^{-1}\left(\frac{v_d}{r^{N-1}}\right)v_d + r^{N-1}\hat f(u_d)(u_d-1) \right] \\
&=\frac{\sin^2\theta_d}{r^{N-1}[1+(v_d/r^{N-1})^2]^{1/2}}
+r^{N-1} \hat f(u_d) \frac{u_d-1}{\rho_d^2}
\end{aligned}
\end{equation}
with initial conditions
\begin{equation}\label{eq:initial-cond}
\theta_d(0)=\begin{cases}
\pi \quad \text{if } 0<d<1\\
0 \quad \text{if } d>1.
\end{cases}
\quad\text{and}\quad
\rho_d(0)=|d-1|.
\end{equation}
By \eqref{eq:theta'} and $(f_{\mathrm{eq}})$, $\theta'_d(r)>0$ for every $r\in [0,R]$, so that the solution $(u_d,v_d)$ is actually turning clockwise around $(1,0)$ in the phase plane $(u,v)$; furthermore, by \eqref{eq:polar}, $u_d(r)=1$ if and only if $\theta_d(r)=\frac{\pi}{2}+ k\pi$ for some $k\in \mathbb Z$. Therefore, since the solutions $(u_d,v_d)$ of \eqref{eq:shooting} have $v_d(0)=0$, the number of half-turns of the solutions around $(1,0)$, is equal to the number of zeros of $u_d(r)-1$ in $(0,R)$, as anticipated in the Introduction (cf. Fig. \ref{fig:polar-coord}).
\begin{figure}[h]
\includegraphics[scale=1]{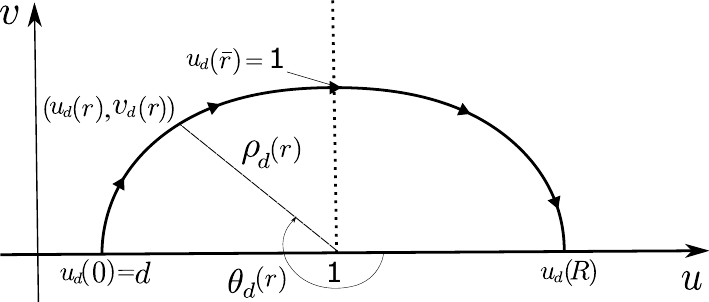}
\caption{The solution $(u_d,v_d)$ of \eqref{eq:shooting} and the polar coordinates $(\rho_d,\theta_d)$ in the phase plane introduced in \eqref{eq:polar}.
}\label{fig:polar-coord}
\end{figure}
We further remark that the continuous dependence, stated in Lemma \ref{lem:uniqueness_Cauchy} for $(u_d,v_d)$ from the initial data, continues to hold also for $(\rho_d,\theta_d)$, when passing to the description of the problem in polar coordinates.
  
Our next goal is to count the number of half-turns performed by a solution of \eqref{eq:shooting}, shot from $d$ in a neighborhood of 1. To this aim, we will estimate the quantity $\theta_d(R) - \theta_d(0)$ for $d$ close enough to 1. As it will be clear from the proof of Lemma \ref{lem:d_close1} below, hypothesis $(f_1)$ plays a crucial role in this estimate, together with a comparison with the
linear eigenvalue problem for the Laplacian in the ball $B_R$ under Neumann boundary conditions, namely
\begin{equation}\label{eq:eigenv_radial}
-(r^{N-1}u')'=\lambda r^{N-1} u\quad\mbox{in }(0,R), \qquad u'(0)=u'(R)=0.
\end{equation}
We consider the change of variables 
\[
\begin{cases}
u(r)=\varrho_\lambda(r)\cos\vartheta_\lambda(r) \\
r^{N-1}u'(r)=-\varrho_\lambda(r)\sin\vartheta_\lambda(r).
\end{cases}
\]
If $u_\lambda$ solves \eqref{eq:eigenv_radial}, its polar coordinates $(\vartheta_\lambda,\varrho_\lambda)$ are such that
\begin{equation}\label{eq:theta'-omogenea-associata}
\vartheta_\lambda'=\frac{\sin^2\vartheta_\lambda}{r^{N-1}}+\lambda r^{N-1}\cos^2\vartheta_\lambda>0, \qquad r\in [0,R].
\end{equation}
Therefore, the angular variable $\vartheta_\lambda(r)$ is strictly increasing in $r$. Moreover, by convention, we consider eigenfunctions $u_\lambda$ with $u_\lambda(0)>0$, thus $\vartheta_\lambda(0)=0$. 

We recall a monotonicity result from \cite{RW99} (see Theorem 4 therein):
\begin{equation}\label{eq:monotonicity}
\mbox{$\vartheta_\lambda(R)$ is strictly increasing in $\lambda$.}
\end{equation} 
Moreover, the eigenfunctions of \eqref{eq:eigenv_radial} satisfy the classical Sturm theory, as stated in the following Theorem.

\begin{theorem}{\cite[Theorem 1]{RW99}}\label{th:eigen}
The problem \eqref{eq:eigenv_radial} has a countable number of simple eigenvalues $0=\lambda_1^{\mathrm{rad}}<\lambda_2^{\mathrm{rad}}<\lambda_3^{\mathrm{rad}}<\dots$, $\lim_{k\to+\infty}\lambda_k^{\mathrm{rad}}=+\infty$, and no other eigenvalues. The eigenfunction $u_{k}$ that corresponds to the $k$-th eigenvalue $\lambda_k^{\mathrm{rad}}$ has exactly $k-1$ simple zeros in $(0,R)$. Namely, 
its angular variable satisfies 
\begin{equation}\label{initialcondition-ep}
\vartheta_{\lambda^{\mathrm{rad}}_{k}}(R) = (k-1)\pi
\quad \mbox{for every integer } k\geq1.
\end{equation}
\end{theorem}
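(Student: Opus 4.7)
The natural approach is the classical Pr\"ufer / phase-plane method, exploiting the angular variable $\vartheta_\lambda$ already introduced before \eqref{eq:theta'-omogenea-associata}. The regularity requirement $u'(0)=0$ combined with the sign convention $u_\lambda(0)>0$ forces $\vartheta_\lambda(0)=0$, while the Neumann condition $u'(R)=0$, via $r^{N-1}u'(R)=-\varrho_\lambda(R)\sin\vartheta_\lambda(R)$ and the fact that $\varrho_\lambda$ does not vanish on nontrivial solutions, is equivalent to $\sin\vartheta_\lambda(R)=0$. Because \eqref{eq:theta'-omogenea-associata} yields $\vartheta_\lambda'>0$, the angle is strictly increasing on $[0,R]$, so the eigenvalue condition becomes $\vartheta_\lambda(R)=m\pi$ for some non-negative integer $m$.

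I would then study the map $\lambda \mapsto \vartheta_\lambda(R)$ on $[0,+\infty)$ and establish three properties: \emph{(a)} continuity, from standard continuous dependence of ODE solutions on parameters (with a bit of care at the singular point $r=0$); \emph{(b)} strict monotonicity, which is exactly the content of \eqref{eq:monotonicity}; \emph{(c)} the boundary behavior $\vartheta_0(R)=0$ and $\vartheta_\lambda(R)\to+\infty$ as $\lambda\to+\infty$. Property \emph{(c)} at $\lambda=0$ is immediate, since the only solution of $(r^{N-1}u')'=0$ with $u'(0)=0$ is constant, and this produces $\lambda_1^{\mathrm{rad}}=0$ with constant eigenfunction. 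Combining \emph{(a)}--\emph{(c)} with the intermediate value theorem yields, for each integer $k\geq 1$, a unique $\lambda_k^{\mathrm{rad}}$ with $\vartheta_{\lambda_k^{\mathrm{rad}}}(R)=(k-1)\pi$, and these exhaust the eigenvalues (by strict monotonicity, no other value of $\lambda$ can give $\sin\vartheta_\lambda(R)=0$).

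The main obstacle is the divergence $\vartheta_\lambda(R)\to+\infty$ as $\lambda\to+\infty$: the weight $r^{N-1}$ degenerates at $r=0$, so one cannot directly bound the right-hand side of \eqref{eq:theta'-omogenea-associata} below by $\lambda r^{N-1}\cos^2\vartheta_\lambda$. I would sidestep this by fixing $\delta\in(0,R)$, working on $[\delta,R]$ where $r^{N-1}$ is bounded both from above and from below, and comparing \eqref{eq:theta'-omogenea-associata} with the autonomous equation $\tilde\vartheta'=\delta^{1-N}\sin^2\tilde\vartheta+\lambda R^{N-1}\cos^2\tilde\vartheta$; a standard Pr\"ufer estimate shows that the latter performs each half-revolution in time $O(1/\sqrt{\lambda})$, so the number of multiples of $\pi$ crossed by $\vartheta_\lambda$ on $[\delta,R]$ diverges as $\lambda\to+\infty$.

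Finally, simplicity of each $\lambda_k^{\mathrm{rad}}$ follows from the uniqueness of the Cauchy problem for \eqref{eq:eigenv_radial}: any eigenfunction is determined by $(u(0),u'(0))=(u(0),0)$, hence two eigenfunctions sharing the same eigenvalue are proportional. The nodal count comes from the identification $u_{\lambda_k^{\mathrm{rad}}}(r)=0 \Leftrightarrow \cos\vartheta_{\lambda_k^{\mathrm{rad}}}(r)=0 \Leftrightarrow \vartheta_{\lambda_k^{\mathrm{rad}}}(r)\in \{\pi/2+j\pi : j\in\N\cup\{0\}\}$: since $\vartheta_{\lambda_k^{\mathrm{rad}}}$ sweeps strictly monotonically from $0$ to $(k-1)\pi$, it attains exactly the values $\pi/2,3\pi/2,\dots,(2k-3)\pi/2$ in $(0,R)$, giving $k-1$ zeros; each zero is simple because at such a point $\sin\vartheta_{\lambda_k^{\mathrm{rad}}}\neq 0$ and hence $u'_{\lambda_k^{\mathrm{rad}}}\neq 0$.
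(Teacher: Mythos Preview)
The paper does not supply its own proof of this statement: Theorem~\ref{th:eigen} is quoted verbatim from \cite[Theorem~1]{RW99} and used as a black box. So there is nothing to compare your argument against within the paper itself. That said, your Pr\"ufer-angle outline is the standard route to such a Sturm-type result and is essentially correct; it is in fact the approach of \cite{RW99}, from which the paper also imports the monotonicity \eqref{eq:monotonicity} and the comparison lemma invoked later in Lemma~\ref{lem:d_close1}.

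One small slip worth fixing: in your divergence step on $[\delta,R]$ you want a \emph{lower} bound for $\vartheta_\lambda'$, but the comparison equation you wrote, $\tilde\vartheta'=\delta^{1-N}\sin^2\tilde\vartheta+\lambda R^{N-1}\cos^2\tilde\vartheta$, is an \emph{upper} bound for the right-hand side of \eqref{eq:theta'-omogenea-associata} on that interval. The correct lower bound swaps the roles of $\delta$ and $R$, namely $\vartheta_\lambda'\geq R^{1-N}\sin^2\vartheta_\lambda+\lambda\,\delta^{N-1}\cos^2\vartheta_\lambda$, and the time for one half-revolution of the comparison equation is then $\int_0^\pi\bigl(R^{1-N}\sin^2 s+\lambda\,\delta^{N-1}\cos^2 s\bigr)^{-1}ds=\pi\bigl(R^{1-N}\lambda\,\delta^{N-1}\bigr)^{-1/2}\to 0$ as $\lambda\to+\infty$, which gives the desired divergence. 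You should also remark that the Cauchy problem for \eqref{eq:eigenv_radial} with data at the singular endpoint $r=0$ does have the uniqueness you invoke for simplicity---this is not entirely automatic because of the $r^{N-1}$ weight, but is covered by the regularity theory in \cite{RW99} (or can be handled by the same integral-equation argument underlying Lemma~\ref{lem:uniqueness_Cauchy}).
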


We are now ready to count the number of half-turns performed by a solution $(u_d,v_d)$ of \eqref{eq:shooting}, with $d$ close to 1.

We prove the following lemma under a weaker hypothesis on $f$ than $(f_1)$, because, as mentioned in the Introduction, the arguments in the proofs of Theorem \ref{thm:main} and of the previous lemmas continue to hold even under the weaker assumption $f'(1)<\lambda_2^{\mathrm{rad}}$.

\begin{lemma}[cf. Lemma 2.5 of \cite{ABF2}]\label{lem:d_close1}
Suppose that, for some integer $k\geq1$, $f'(1)<\lambda_{k+1}^{\textnormal{rad}}$.
Then, there exists $\bar\delta>0$ such that $\theta_d(R) - \theta_d(0)<k\pi$ for $d\in [1-\bar\delta,1+\bar\delta]\setminus\{1\}$.

In particular, if $(f_1)$ holds, $\theta_d(R) - \theta_d(0)<\pi$ for $d\in [1-\bar\delta,1+\bar\delta]\setminus\{1\}$, that is $(u_d,v_d)$ performs less than one half-turn around $(1,0)$.
\end{lemma}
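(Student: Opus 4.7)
\medskip

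\noindent\textbf{Proof plan.} I would argue by contradiction. Suppose there exists a sequence $d_n \to 1$, $d_n \neq 1$, such that $\theta_{d_n}(R) - \theta_{d_n}(0) \geq k\pi$ for all $n$. The strategy is to show that, after a suitable linear rescaling centred at the equilibrium $(1,0)$, the solutions $(u_{d_n}, v_{d_n})$ converge to a nontrivial solution of the linear Neumann eigenvalue problem \eqref{eq:eigenv_radial} with $\lambda = f'(1)$, whose P\"ufer angle cannot grow by $k\pi$ on $[0,R]$ by hypothesis.

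Concretely, set $w_n(r) := (u_{d_n}(r) - 1)/|d_n-1|$ and $z_n(r) := v_{d_n}(r)/|d_n-1|$. Since positive scaling of $(u-1,v)$ preserves the polar angle around $(1,0)$, the angle of $(w_n, z_n)$ about the origin coincides with $\theta_{d_n}$. Plugging into \eqref{eq:shooting} and using $\varphi^{-1}(t) = t + O(t^3)$ as $t\to 0$ together with the Taylor expansion $\hat f(1+s) = f'(1)s + o(s)$ (guaranteed by $(f_{\mathrm{reg}})$ and $\hat f(1)=0$), one verifies that $(w_n, z_n)$ satisfies
\begin{equation*}
w_n' = \frac{z_n}{r^{N-1}} + \eta_n^{(1)}(r), \qquad z_n' = -f'(1)\,r^{N-1} w_n + \eta_n^{(2)}(r),
\end{equation*}
with $(w_n(0), z_n(0)) = (\pm 1, 0)$ and remainders $\eta_n^{(i)}$ vanishing as $n\to\infty$, once $\|w_n\|_\infty + \|z_n\|_\infty$ is under control. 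A standard Gronwall argument on the above perturbed linear system (combined with the a priori bound $|\varphi^{-1}(t)|<1$ from \eqref{eq:inverse-phi}) yields such a uniform bound; then Ascoli--Arzel\`a and standard ODE convergence arguments give, up to subsequence, $(w_n, z_n) \to (w_\infty, z_\infty)$ uniformly on $[0,R]$, where the limit solves the linear system $w_\infty' = z_\infty/r^{N-1}$, $z_\infty' = - f'(1) r^{N-1} w_\infty$, with $(w_\infty(0), z_\infty(0)) = (\pm 1, 0)$. Equivalently, $w_\infty$ is a nontrivial solution of \eqref{eq:eigenv_radial} with $\lambda = f'(1)$.

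Since $(w_\infty, z_\infty)$ stays away from $(0,0)$ on $[0,R]$ (the pair solves a linear Cauchy problem with nonzero initial datum), the polar angle of $(w_n, z_n)$ converges uniformly to the polar angle $\vartheta_{f'(1)}$ of $(w_\infty, r^{N-1} w_\infty')$ described in \eqref{eq:theta'-omogenea-associata}, up to the trivial shift by $\pi$ when $d_n<1$. Hence, passing to the limit in $\theta_{d_n}(R)-\theta_{d_n}(0) \geq k\pi$ gives $\vartheta_{f'(1)}(R) \geq k\pi$. On the other hand, the assumption $f'(1) < \lambda_{k+1}^{\mathrm{rad}}$, together with the monotonicity \eqref{eq:monotonicity} and Theorem \ref{th:eigen}, yields $\vartheta_{f'(1)}(R) < \vartheta_{\lambda_{k+1}^{\mathrm{rad}}}(R) = k\pi$, a contradiction. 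The second assertion follows by taking $k=1$ and noting that $\lambda_2^{\mathrm{rad}}>0 \geq f'(1)$ under $(f_1)$.

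The main obstacle in making this rigorous is the singularity $1/r^{N-1}$ at $r=0$, which appears both in the rescaled system and in the P\"ufer equations \eqref{eq:theta'} and \eqref{eq:theta'-omogenea-associata}. The cleanest workaround is to argue separately on a small $[0, r_0]$ (where the uniform boundedness of $u_d', u_d''$ near $r=0$ from Lemma \ref{lem:uniqueness_Cauchy} keeps the angular increment arbitrarily small, uniformly in $d$ close to $1$) and on $[r_0, R]$ (where $r^{N-1}$ is bounded below and the convergence of the rescaled system follows by standard continuous dependence on parameters).
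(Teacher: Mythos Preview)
Your contradiction-plus-rescaling argument is sound in outline, and with the care you indicate near $r=0$ it can be made rigorous. It is, however, a genuinely different route from the paper's. The paper proceeds by a \emph{direct} Pr\"ufer comparison: pick $\bar\lambda$ with $f'(1)<\bar\lambda<\lambda_{k+1}^{\mathrm{rad}}$; by $(f_{\mathrm{reg}})$ one has $\hat f(s)(s-1)\le \bar\lambda(s-1)^2$ for $|s-1|\le\delta$; then continuous dependence \eqref{eq:dip_cont} keeps $|u_d-1|\le\delta$ on $[0,R]$ for $|d-1|\le\bar\delta$, and inserting this into \eqref{eq:theta'} yields the differential inequality
\[
\theta_d'(r)\le \frac{\sin^2\theta_d(r)}{r^{N-1}}+\bar\lambda\, r^{N-1}\cos^2\theta_d(r),
\]
whose right-hand side is exactly the Pr\"ufer equation \eqref{eq:theta'-omogenea-associata} with $\lambda=\bar\lambda$. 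A comparison theorem for this singular ODE (cited from \cite{RW99}) then gives $\theta_d(R)-\theta_d(0)\le\vartheta_{\bar\lambda}(R)<\vartheta_{\lambda_{k+1}^{\mathrm{rad}}}(R)=k\pi$ directly.

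The advantage of the paper's argument is that the singularity at $r=0$ is absorbed into the comparison lemma once and for all: since both $\theta_d$ and $\vartheta_{\bar\lambda}$ satisfy equations with the \emph{same} singular term $\sin^2(\cdot)/r^{N-1}$, no separate treatment of a neighbourhood of $0$ is needed, and no limiting or compactness machinery enters. Your approach, by contrast, builds the linearisation explicitly via Ascoli--Arzel\`a; it is more flexible (it would work equally well for operators where the Pr\"ufer inequality is less transparent), but here it costs you the extra splitting $[0,r_0]\cup[r_0,R]$ and a Gronwall bootstrap to control $(w_n,z_n)$ uniformly. Your justification ``uniform boundedness of $u_d', u_d''$ keeps the angular increment small'' is a bit thin as written---the angle depends on ratios, not absolute sizes---but the claim itself is correct and follows, for instance, from the same differential inequality the paper uses, restricted to $[0,r_0]$.
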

\begin{proof}
Let $\bar\lambda, \varepsilon>0$ be such that
\begin{equation}\label{eq:bar_lambda_def}
f'(1)+\varepsilon \leq \bar \lambda < \lambda_{k+1}^{\mathrm{rad}}.
\end{equation}
Then, using assumptions $(f_\textrm{reg})$ and $(f_\textrm{eq})$, there exists $\delta>0$ such that, for every $s$ satisfying $|s-1|\leq \delta$, it holds
\begin{equation}\label{eq:ineq1}
\hat f(s)(s-1)\leq (f'(1)+\varepsilon)(s-1)^2 \leq
\bar\lambda (s-1)^2.
\end{equation}
Thanks to \eqref{eq:dip_cont}, there exists $\bar\delta>0$ such that, for every $d\neq1$ satisfying $|d-1|\leq\bar\delta$, it holds
\begin{equation}\label{eq:ineq2}
|u_d(r)-1|\leq \delta \quad\mbox{for every }r\in [0,R],
\end{equation}
being $u_{1}\equiv 1$ in $[0,R]$. 
By replacing \eqref{eq:ineq1} and \eqref{eq:ineq2} into \eqref{eq:theta'}, and recalling \eqref{eq:polar}, we obtain that, for every $d$ satisfying $0<|d-1|\leq \bar\delta$ and $r\in [0,R]$,
\begin{equation}
\theta_d'(r)\leq \frac{\sin^2\theta_d(r)}{r^{N-1}}+\bar\lambda r^{N-1} \cos^2\theta_d(r).
\end{equation}
Using equation \eqref{eq:theta'-omogenea-associata} with $\lambda=\bar\lambda$, the Comparison Theorem for ODEs  (see [Lemma 4, RW99]), and recalling that $\vartheta_{\bar\lambda}(0)=0$, we obtain, for all $d\neq 1$ satisfying $0<|d-1|\leq\bar\delta$,
$$
\theta_d(r) - \theta_d(0)\leq \vartheta_{\bar\lambda}(r)\quad\mbox{for all }r\in[0,R].
$$
In particular, by the fact that $\theta'_d>0$, relation \eqref{eq:bar_lambda_def}, the monotonicity \eqref{eq:monotonicity} and Theorem \ref{th:eigen}, we have
\[
\theta_d(R) - \theta_d(0)<  \vartheta_{\lambda_{k+1}^{\mathrm{rad}}}(R) = k\pi.  \qedhere
\]
\end{proof}

Up to now, we have found that solutions of \eqref{eq:shooting} shot from some $d$ close to 1 are very ``slow'', in the sense that, in the interval $[0,R]$, they cannot even complete one half-turn. We also know that, if the solution is shot exactly from $d=0$, it is even slower (it is constant!) and it performs exactly zero half-turns
around the point $(1,0)$. In the next lemma, arguing as in the proof of \cite[Theorem 1.1]{ABF3}, we will prove that also solutions shot from $d$ large are very ``slow''. Here the singular character of the Minkowski-curvature operator and, in particular, relation \eqref{eq:inverse-phi} play an important role. 

\begin{lemma}\label{le:d-large}
If $d\ge 1+R$, the solution $(u_d,v_d)$ of \eqref{eq:shooting} performs less than one half-turn around $(1,0)$ in the phase plane. Equivalently, $\theta_d(R)<\pi$ if $d\ge 1+R$.
\end{lemma}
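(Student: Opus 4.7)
The plan is to exploit the characteristic feature of the Minkowski-curvature operator, namely the uniform bound $|\varphi^{-1}(t)|<1$ from \eqref{eq:inverse-phi}, which forces the solution $u_d$ to have slope strictly less than $1$ in modulus. Since $d\geq 1+R$, this slope restriction prevents $u_d$ from reaching the equilibrium level $u=1$ within the interval $[0,R]$, and the conclusion $\theta_d(R)<\pi$ will then follow directly from the polar-coordinate description.

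First, I would observe that from the first equation of \eqref{eq:shooting} and from \eqref{eq:inverse-phi},
\[
|u_d'(r)| = \left|\varphi^{-1}\!\left(\tfrac{v_d(r)}{r^{N-1}}\right)\right| < 1 \qquad \text{for every } r\in(0,R],
\]
while $u_d'(0)=0$ by Lemma \ref{lem:uniqueness_Cauchy}. Integrating this strict inequality yields, for every $r\in(0,R]$,
\[
u_d(r) = d + \int_0^r u_d'(s)\,ds \;>\; d - r \;\geq\; d-R \;\geq\; 1,
\]
where the first strict inequality uses that $u_d'(0)=0>-1$ together with the continuity of $u_d'$. Combined with $u_d(0)=d\geq 1+R>1$, we conclude that $u_d(r)>1$ for every $r\in[0,R]$.

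Finally, I would translate this into the polar-coordinate picture \eqref{eq:polar}. The relation $u_d(r)-1=\rho_d(r)\cos\theta_d(r)$ and the fact that $\rho_d(r)>0$ (since $(u_d(r),v_d(r))\neq(1,0)$) give $\cos\theta_d(r)>0$ for every $r\in[0,R]$. Because $d>1$ we have the initial condition $\theta_d(0)=0$ from \eqref{eq:initial-cond}, and $\theta_d$ is continuous and strictly increasing by \eqref{eq:theta'}. A continuity argument then confines $\theta_d(r)$ to the interval $[0,\pi/2)$ on all of $[0,R]$; in particular $\theta_d(R)<\pi/2<\pi$, as claimed.

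No serious obstacle is expected here: the entire argument rests on the a priori Lipschitz bound on $u_d$ coming from the singular character of $\varphi^{-1}$, a feature already emphasized in the Introduction. The only mild point of care is the passage from the non-strict bound $|u_d'|\leq 1$ to the strict lower bound $u_d(r)>d-r$, which is handled by noting that $u_d'(0)=0$ strictly exceeds $-1$.
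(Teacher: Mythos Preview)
Your proposal is correct and follows essentially the same route as the paper: both arguments exploit the uniform bound $|u_d'|<1$ coming from \eqref{eq:inverse-phi} and integrate it to conclude that $u_d$ never drops to the level $1$ on $[0,R]$, whence $\theta_d(R)<\pi$. Your version is simply more explicit about the strictness of the inequalities and about the translation into polar coordinates.
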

\begin{proof}
By \eqref{eq:inverse-phi}, we get for every $r\in[0,R]$
$$u_d(r)\ge d -\int_0^r|u'_d(s)|ds\ge d-R \ge 1.$$
Since $(u_d,v_d)$ is turning around $(1,0)$ and $u_d(0)>1$, this proves that $v_d(r)\neq0$ for every $r\in [0,R]$. 
\end{proof}

\section{The ``fast'' solutions of \eqref{sys:intro} and the proof of Theorem \ref{thm:main}}\label{sec:3}
In this section we prove a sufficient condition on the size of the domain $B_R$, to get multiplicity of oscillatory solutions to \eqref{eq:main}. In the previous section, we found that the Cauchy problem \eqref{eq:shooting} does not have any solutions $(u_d,v_d)$ with $v_d(R)=0$ if $d\in (0,\delta]\cup[1-\bar\delta,1)\cup(1,1+\bar\delta]\cup[1+R,+\infty)$, for $\delta>0$ sufficiently small and $\bar \delta$ as in Lemma \ref{lem:d_close1}. 
Adapting a method introduced in \cite{BosZan13}, we are able to prove that, if the radius of the ball $B_R$ is sufficiently large, there exist two initial values, $(d_k^-)^*\in(\delta,1-\bar\delta)$ and $(d_k^+)^*\in(1+\bar\delta,1+R)$, such that the solutions $(u_{(d_k^\pm)^*},v_{(d_k^\pm)^*})$ of \eqref{eq:shooting} turn around $(1,0)$ more than $k$ half-times. The estimate is performed using two spiral-like curves which bound the solution either from below or from above in each quarter of the phase plane. Once we have proved the existence of such $(d_k^\pm)^*$, Theorem \ref{thm:main} immediately follows by a continuation argument, cf. Fig. \ref{fig:number-ht-2}.  

We report below the statement of the general result that uses the method of the spiral-like curves, in the version proved in \cite{BoGa-ccm}.
\begin{lemma}[Proposition 2.1 of \cite{BoGa-ccm}]\label{lem:spirals}
Let $a_i,\,b_i:(-\delta,\delta)\to\mathbb R$, $i\in\{1,2\}$, $\delta>0$, be two locally Lipschitz functions verifying
\begin{equation}\label{eq:ai-bi}
0 < a_1(s)s \le b_1(s)s \quad\mbox{and}\quad 0 < b_2(s)s \le a_2(s)s \quad\mbox{for every } s\in(-\delta,\delta)\setminus\{0\}.
\end{equation}
Then, for every $k\in\mathbb N$, $k\ge 1$, there exist $\tau_k^*>0$ and $\rho_k^*\in(0,\delta)$ such that for every interval $I=[r_0,r_1]$, with $r_1-r_0>\tau_k^*$, and for every couple of locally Lipschitz functions $X,\,Y :I\times\mathbb R\to\mathbb R$ satisfying
\begin{equation}\label{eq:hpX}
a_1(y)y \le X(r,y)y \le b_1(y)y \quad\mbox{for every } (r,y)\in I\times(-\delta,\delta),
\end{equation}
\begin{equation}\label{eq:hpY}
b_2(x)x \le Y(r,x)x \le a_2(x)x \quad\mbox{for every } (r,x)\in I\times(-\delta,\delta),
\end{equation}
it holds that every solution $(x(r),y(r))$ defined in $I$ of 
$$
\begin{cases}
x'&=X(r,y),\\
y'&=-Y(r,x),\\
\end{cases}
$$
with 
$x(r_0)^2+y(r_0)^2=(\rho_k^*)^2$,
satisfies
\begin{itemize}
\item[(i)] $x(r)^2+y(r)^2>0$ for every $r\in I$;
\item[(ii)] $\theta(r_1)-\theta(r_0)>k\pi$, 
\end{itemize}
 where $(\rho,\theta)$ are the polar coordinates of $(x,y)$ centered at $(0,0)$, namely $x(r)=\rho(r)\cos(\theta(r))$ and $y(r)=-\rho(r)\sin(\theta(r))$.
\end{lemma}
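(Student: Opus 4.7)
The plan is to pass to the clockwise polar coordinates $x=\rho\cos\theta$, $y=-\rho\sin\theta$, and to sandwich the non-autonomous trajectory between two autonomous Hamiltonian ``spirals'' built from the bounding functions $a_{i},b_{i}$. A routine computation gives the angular equation
$$
\theta'(r)\;=\;\frac{y\,X(r,y)+x\,Y(r,x)}{\rho^{2}}\;\geq\;\frac{a_{1}(y)\,y+b_{2}(x)\,x}{\rho^{2}}\;>\;0
$$
whenever $(x,y)\neq(0,0)$, by \eqref{eq:ai-bi} together with \eqref{eq:hpX}--\eqref{eq:hpY}. Hence $\theta$ is strictly increasing as long as the orbit avoids the origin, and the whole proof reduces to (a) keeping the orbit in a small punctured annulus around $(0,0)$ for every $r\in I$ (which is item (i) and also makes polar coordinates globally well defined), and (b) promoting the pointwise lower bound above to a uniform one, large enough to guarantee an angular displacement exceeding $k\pi$ on any sub-interval of length at least $\tau_{k}^{*}$.

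As the two ``walls'' of the sandwich I would take the autonomous Hamiltonian systems
$$
(F_{1}):\ \xi'=a_{1}(\eta),\ \eta'=-a_{2}(\xi),\qquad H_{1}(x,y)=A_{1}(y)+A_{2}(x),
$$
$$
(F_{2}):\ \xi'=b_{1}(\eta),\ \eta'=-b_{2}(\xi),\qquad H_{2}(x,y)=B_{1}(y)+B_{2}(x),
$$
with $A_{i}(s)=\int_{0}^{s}a_{i}(\tau)\,d\tau$ and $B_{i}(s)=\int_{0}^{s}b_{i}(\tau)\,d\tau$. By \eqref{eq:ai-bi}, all four primitives are positive-definite on $(-\delta,\delta)$, so for every sufficiently small level $c>0$ the sets $\{H_{i}=c\}$ are smooth closed curves encircling the origin, traversed clockwise by the flow of $(F_{i})$ in a finite strictly positive period depending only on $c$ and on the data.

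The core step is the spiral-trapping claim: for $\rho_{k}^{*}$ chosen small enough, the non-autonomous trajectory launched from the circle $\{x^{2}+y^{2}=(\rho_{k}^{*})^{2}\}$ stays, for every $r\in I$, inside an annular region $\{H_{1}\geq h_{1}^{\min}\}\cap\{H_{2}\leq h_{2}^{\max}\}$ with small positive $h_{1}^{\min},h_{2}^{\max}$. I would prove this quadrant by quadrant: inside each open quadrant, \eqref{eq:hpX}--\eqref{eq:hpY} sandwich the orbit slope $dy/dx=-Y(r,x)/X(r,y)$ between the slopes produced by $(F_{1})$ and $(F_{2})$, so that scalar integration of the resulting differential inequality traps the trajectory between an ``inner'' level curve of $H_{1}$ (keeping it away from the origin) and an ``outer'' level curve of $H_{2}$ (keeping it inside $(-\delta,\delta)^{2}$). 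Because $\theta'>0$ fixes the cyclic order of axis crossings and each crossing is reached in finite $r$, the barrier levels can be propagated from quadrant to quadrant throughout $I$. The annular confinement simultaneously yields item (i) and, via the local Lipschitz character of $a_{1},b_{2}$, a uniform bound $\theta'(r)\geq c_{k}^{*}>0$; choosing $\tau_{k}^{*}:=k\pi/c_{k}^{*}$ then gives $\theta(r_{1})-\theta(r_{0})>k\pi$ as soon as $r_{1}-r_{0}>\tau_{k}^{*}$, proving (ii).

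The main obstacle I foresee is precisely the quadrant-to-quadrant matching, because the inner and outer barriers are produced by two \emph{different} Hamiltonians, so their level values are not automatically preserved across an axis crossing. A natural device is to track the pair $(H_{1}(x(r),y(r)),H_{2}(x(r),y(r)))$ along the trajectory and show, using the slope sandwich and the positivity of $\theta'$, that once both quantities are sufficiently small at the initial point they remain within a compact sublevel-superlevel region at every subsequent crossing. This reduces the persistence of the annulus to a single quadrant argument repeated $2k$ times and bypasses the need for a globally conserved quantity for the non-autonomous system.
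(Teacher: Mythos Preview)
The paper does not prove this lemma: it is quoted as Proposition~2.1 of \cite{BoGa-ccm} (itself an adaptation of the construction in \cite{BosZan13}) and used as a black box, so there is no in-paper argument to compare your attempt against. That said, your outline is precisely the spiral-trapping strategy of those references: pass to clockwise polar coordinates, use \eqref{eq:ai-bi}--\eqref{eq:hpY} to get $\theta'>0$ away from the origin, and sandwich the non-autonomous orbit quadrant by quadrant between level curves of the two autonomous Hamiltonians $H_1=A_1+A_2$ and $H_2=B_1+B_2$. You have also correctly located the one genuinely delicate point---the propagation of the inner and outer barrier levels across axis crossings, where the roles of the two Hamiltonians are exchanged---and your proposed remedy (tracking both $H_1$ and $H_2$ along the orbit and iterating a single-quadrant estimate $2k$ times) is exactly how \cite{BosZan13,BoGa-ccm} proceed.

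One caveat on your concluding step. Deducing $\tau_k^*=k\pi/c_k^*$ from a single global bound $\theta'\ge c_k^*$ is slightly circular as written, because the annular region on which $c_k^*$ is obtained is itself the output of the trapping argument and varies from quadrant to quadrant; moreover, item~(i) must hold on the whole of $I$, whose length may far exceed $\tau_k^*$, so the confinement has to be shown to persist indefinitely, not just through the first $2k$ quarter-turns. In the cited papers this is handled by bounding the transit time of each quarter-turn separately (via the quarter-period of the slower autonomous system on the relevant level) and by checking that the sequence of barrier levels, though it shifts at each crossing, neither collapses to the origin nor escapes $(-\delta,\delta)^2$; the dependence of $\rho_k^*$ on $k$ enters precisely here. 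Your sketch is compatible with this, but the bookkeeping is where the actual work lies.
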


As in \cite{BoGa-ccm}, we introduce the following auxiliary Cauchy problem: 
\begin{equation}\label{eq:shooting-aux}
\begin{cases}
u'=\tilde\varphi^{-1}\left(\frac{v}{r^{N-1}}\right) \qquad & r\in (0,R)\\
v'=-r^{N-1} \tilde f(u) \qquad & r\in (0,R) \\
u(0)=d, \quad v(0)=0
\end{cases}\qquad\qquad (d\ge 0),
\end{equation}
where $\tilde f:\mathbb R \to\mathbb R$ is a locally Lipschitz function such that 
$$
\tilde f(s):=
\begin{cases}
\hat f(s)\quad&\mbox{if }|s|\le 1+R,\\
0&\mbox{if }|s|\ge 2+R.
\end{cases}
$$
Let $M:=\max_{s\in\mathbb R}|\tilde f(s)|$ and $\gamma:=\varphi^{-1}(MR)\in(0,1)$. The $C^1$-function $\tilde \varphi:\mathbb R\to\mathbb R$ is defined as follows: 
$$
\tilde\varphi(s):=
\begin{cases}
\varphi(s)\quad&\mbox{if }|s|\le \gamma,\\
\varphi'(\gamma)(s-\gamma)-\varphi(\gamma)&\mbox{if }s<-\gamma,\\
\varphi'(\gamma)(s-\gamma)+\varphi(\gamma)&\mbox{if }s>\gamma.
\end{cases}
$$
We observe that $\tilde\varphi$ is odd and strictly increasing and so, also its inverse enjoys the same properties. 

As for \eqref{eq:shooting}, also for \eqref{eq:shooting-aux} it is possible to prove global existence, uniqueness, and continuous dependence on the initial data of the solution, cf. \cite[Lemma 3.2 with $\lambda=1$]{BoGa-ccm}. Furthermore, we prove below that the oscillatory solutions of \eqref{eq:shooting-aux} solve also \eqref{eq:shooting}.  
\begin{lemma}\label{le:Paux-to-P}
Let $(u,v)$ be a solution of \eqref{eq:shooting-aux} such that $u\in C^1([0,R])$ and $u(\bar r)=1$ for some $\bar r\in (0,R)$. Then, $(u,v)$ solves \eqref{eq:shooting}.
\end{lemma}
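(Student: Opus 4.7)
The plan is to reduce \eqref{eq:shooting-aux} to \eqref{eq:shooting} by showing that, along the solution in question, the arguments of $\tilde\varphi^{-1}$ and of $\tilde f$ stay in the ranges where the truncations have not modified the original functions. Concretely, I need to verify two things:
\begin{enumerate}
\item $|u'(r)|\le\gamma<1$ on $[0,R]$, so that $\tilde\varphi^{-1}\!\left(v(r)/r^{N-1}\right)=\varphi^{-1}\!\left(v(r)/r^{N-1}\right)$;
\item $|u(r)|\le 1+R$ on $[0,R]$, so that $\tilde f(u(r))=\hat f(u(r))$.
\end{enumerate}

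For the first point, integration of the second equation in \eqref{eq:shooting-aux} from $0$ (using $v(0)=0$) together with the global bound $|\tilde f|\le M$ yields
\[
|v(r)|\le M\int_0^r s^{N-1}\,ds = \frac{M r^{N}}{N}\qquad \text{for every }r\in[0,R],
\]
so that, using $N\ge 1$ and $r\le R$,
\[
\left|\frac{v(r)}{r^{N-1}}\right|\le \frac{Mr}{N}\le MR=\varphi(\gamma)\qquad\text{for every }r\in(0,R].
\]
Since $\tilde\varphi$ coincides with $\varphi$ on $[-\gamma,\gamma]$, its inverse coincides with $\varphi^{-1}$ on $[-\varphi(\gamma),\varphi(\gamma)]$; hence
\[
u'(r)=\tilde\varphi^{-1}\!\left(\frac{v(r)}{r^{N-1}}\right)=\varphi^{-1}\!\left(\frac{v(r)}{r^{N-1}}\right),\qquad |u'(r)|\le\gamma\qquad\text{for every }r\in(0,R],
\]
and the bound extends to $r=0$ by the assumed $C^1$ regularity of $u$.

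For the second point, I use the hypothesis $u(\bar r)=1$: integrating $u'$ from $\bar r$ to $r$ and using the bound just obtained gives
\[
|u(r)-1|=\left|\int_{\bar r}^{r} u'(s)\,ds\right|\le \gamma |r-\bar r|\le \gamma R< R,
\]
so $|u(r)|\le 1+R$ on $[0,R]$, whence $\tilde f(u(r))=\hat f(u(r))$. Combining the two facts, the ODE system \eqref{eq:shooting-aux} reduces exactly to \eqref{eq:shooting} along $(u,v)$, and the initial conditions are the same, concluding the proof.

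The only delicate point is handling the singular weight $r^{N-1}$ near $r=0$ when bounding $v/r^{N-1}$; but this is resolved cleanly by integrating from $0$ and exploiting $N\ge 1$, which produces the clean estimate $|v/r^{N-1}|\le MR=\varphi(\gamma)$. Everything else is a direct consequence of the definitions of $\tilde\varphi$, $\tilde f$, and $\gamma$.
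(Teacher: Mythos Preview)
Your proof is correct and follows essentially the same approach as the paper's: both establish $|u'|\le\gamma$ by integrating the $v'$-equation from $0$ and using $|\tilde f|\le M$, then use $u(\bar r)=1$ together with $|u'|\le\gamma<1$ to confine $u$ to $[1-R,1+R]$, whence both truncations are inactive. Your presentation is slightly cleaner in that you work directly with $v$ and make the factor $1/N$ explicit before discarding it, whereas the paper manipulates $r^{N-1}\tilde\varphi(u')$ and applies monotonicity and oddness of $\tilde\varphi^{-1}$, but the two arguments are the same in substance.
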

\begin{proof}
Following the argument in the proof of \cite[Lemma 3.1]{BoGa-ccm}, for every $r\in[0,R]$, we integrate the equation for $v$ in \eqref{eq:shooting-aux} to get
$$
r^{N-1}\tilde\varphi(u'(r))= -\int_0^r s^{N-1}\tilde f(u(s))ds. 
$$
Thus, using the properties of $\tilde\varphi^{-1}$,
\begin{equation}\label{eq:tildevarphi}
\begin{aligned}
|u'(r)|&= \left|-\tilde\varphi^{-1}\left(\frac{1}{r^{N-1}}\int_0^r s^{N-1}\tilde f(u(s))ds\right)\right|\\
&\le \tilde\varphi^{-1}\left(\int_0^r \left|\left(\frac{s}{r}\right)^{N-1}\tilde f(u(s))\right|ds\right)\le \tilde\varphi^{-1}\left(\int_0^R M ds\right)=\gamma.
\end{aligned}
\end{equation}
Therefore, $\tilde\varphi(u'(r))=\varphi(u'(r))$, that is $u'(r)=\varphi^{-1}\left(\frac{v}{r^{N-1}}\right)$ for every $r\in [0,R]$. On the other hand, since $u(\bar r)=1$, by \eqref{eq:tildevarphi}, we get for every $r\in[0,R]$
$$
|u(r)|= \left|1+\int_{\bar r}^r u'(s) ds\right|\le 1+\int_0^R|u'(s)|ds \le \gamma R+1<R+1,
$$
and so $\tilde f(u(r))=\hat f(u(r))$, that is $v'(r)=-r^{N-1}\hat f(u(r))$ for every $r\in[0,R]$. 
\end{proof}

Thanks to the uniqueness, also in \eqref{eq:shooting-aux} we can pass to polar coordinates $(\rho,\theta)$ centered at $(1,0)$ as in \eqref{eq:polar}. If $(u_d,v_d)$ for some $d\in [0,\infty)\setminus\{1\}$ solves \eqref{eq:shooting-aux}, its angular variable $\theta_d$ solves the following differential equation
\begin{equation}\label{eq:tilde-theta'}
\begin{aligned}
\theta_d'&=\frac{1}{\rho_d^2} \left[\tilde\varphi^{-1}\left(\frac{v_d}{r^{N-1}}\right)v_d + r^{N-1}\tilde f(u_d)(u_d-1) \right],
\end{aligned}
\end{equation}
with initial conditions as in \eqref{eq:initial-cond}. Again, $\theta_d'(r)\ge 0$ by $(f_{\mathrm{eq}})$ and the definitions of $\tilde f$ and $\tilde \varphi$, so that the solution $(u_d,v_d)$ turns clockwise around $(1,0)$ in the phase plane $(u,v)$. 

We are now ready to prove the main result of this paper.
\begin{proof}[$\bullet$ Proof of Theorem \ref{thm:main}]
We want to apply Lemma \ref{lem:spirals} with
$$
x(r)=u(r)-1,\quad y(r)=v(r),\quad X(r,y)=\tilde\varphi^{-1}\left(\frac{y}{r^{N-1}}\right),\quad Y(r,x)=r^{N-1}\tilde f(x+1).
$$
In order to let conditions \eqref{eq:ai-bi}, \eqref{eq:hpX} and \eqref{eq:hpY} be satisfied, we need the factor $r^{N-1}$ to be away from zero. Thus, let $r_0$ be any constant such that $0<r_0<R$, and consider the interval $I=[r_0,R]$. 
Let $0<\delta < \min\{1,R\}$. 
If we define, for $i\in \{1,2\}$, the locally Lipschitz functions $a_i,\,b_i:(-\delta,\delta)\to \mathbb R$ as follows  
$$a_1(s):=\frac{s}{\varphi'(\gamma)R^{N-1}}, \quad b_1(s):=\frac{s}{r_0^{N-1}}, \quad a_2(s):=R^{N-1}\tilde f(s+1), \quad b_2(s):=r_0^{N-1}\tilde f(s+1),$$
condition \eqref{eq:ai-bi} is clearly satisfied (notice that $\varphi'(\gamma)=(1+M^2R^2)^{3/2}>1$). Furthermore, for every $r\in I$ and $(u-1,v)\in(-\delta,\delta)\times(-\delta,\delta)$ the following conditions hold:
$$
\begin{aligned}
&\frac{v^2}{\varphi'(\gamma)R^{N-1}}\le \tilde\varphi^{-1}\left(\frac{v}{r^{N-1}}\right)v \le \frac{v^2}{r_0^{N-1}},\\
&r_0^{N-1}\tilde f(u)(u-1) \le  r^{N-1}\tilde f(u)(u-1)\le R^{N-1}\tilde f(u)(u-1),
\end{aligned}
$$
where we used that for every $s\in\mathbb R$, $f(s)(s-1)\ge0$ by $(f_{\mathrm{eq}})$, $\tilde\varphi(s)s\ge0$, and 
$$
\frac{s^2}{\varphi'(\gamma)} \le \tilde\varphi^{-1}(s)s \le s^2\quad\mbox{for every }s\in \mathbb R.
$$
Therefore, since all hypotheses of Lemma \ref{lem:spirals} are satisfied, for every integer $k\ge 1$, there exist $R^*_k:=\tau_k^*$ and $\rho_k^*\in(0,\delta)$ such that for every  solution $(u(r),v(r))$ of 
\begin{equation}\label{eq:Paux}
\begin{cases}
u'&=\tilde\varphi^{-1}\left(\frac{v}{r^{N-1}}\right),\\
v'&=-r^{N-1}\tilde f(u)
\end{cases}
\end{equation}
defined in $[r_0,R]$, such that $(u(r_0)-1)^2+v(r_0)^2=(\rho_k^*)^2$, the corresponding angular variable verifies
\begin{equation}\label{eq:thetar0}
\theta(R)-\theta(r_0)>k\pi.
\end{equation}
Now, since $\delta<R$, and using the fact that \eqref{eq:shooting-aux} admits the constant solutions $(u_0,v_0)\equiv(0,0)$, $(u_1,v_1)\equiv(1,0)$ and $(u_{R+2},v_{R+2})\equiv (R+2,0)$, by continuous dependence (\cite[Lemma 3.2]{BoGa-ccm}) there exist $(d_k^-)^*\in (0,1)$ and $(d_k^+)^*\in(1,R+1)$ such that the solution $(u(r),v(r))$, defined in $I$, actually comes from a solution $(u_{(d_k^\pm)^*},v_{(d_k^\pm)^*})$ of \eqref{eq:shooting-aux} defined in the whole interval $[0,R]$:
$$
(u_{(d_k^\pm)^*}(r_0)-1)^2+v_{(d_k^\pm)^*}(r_0^2)=(\rho_k^*)^2.
$$
Then, recalling that $\theta_{(d_k^\pm)^*}'\ge 0$ (cf. \eqref{eq:tilde-theta'}), we obtain
$$
\tilde\theta_{(d_k^\pm)^*}(R)-\tilde\theta_{(d_k^\pm)^*}(0)\ge \tilde\theta_{(d_k^\pm)^*}(R)-\tilde\theta_{(d_k^\pm)^*}(r_0)>k\pi.
$$
This means that the two functions $u_{(d_k^\pm)^*}-1$ have more than $k$ zeros, with $k\geq1$. So, by Lemma \ref{le:Paux-to-P}, we know that they actually solve \eqref{eq:shooting}.
Therefore, using Lemmas \ref{lem:d_close1} and \ref{le:d-large}, the fact that $\theta_0\equiv 0$ in $[0,R]$, and the continuous dependence \eqref{eq:dip_cont}, we get the existence of $4k$ initial data $d_j^\pm$ ordered as follows (cf. also Fig. \ref{fig:number-ht-2})
\begin{equation}\label{eq:dkpm}
\begin{aligned}
0&<d^-_1<d^-_2<\dots<d^-_k<(d^-_k)^*<d^-_{k+1}<d^-_{k+2}<\dots<d^-_{2k}\\
& <1<d^+_1<d^+_2<\dots<d^+_k<(d^+_k)^*<d^+_{k+1}<d^+_{k+2}<\dots<d^+_{2k}<R+1,
\end{aligned}
\end{equation}
such that every solution $(u^{\pm}_{j},v^{\pm}_{j}):=(u_{d_j^\pm},v_{d_j^{\pm}})$ of \eqref{eq:shooting} has $(u^\pm_j)'(R)=0$, and moreover 
\begin{equation}\label{eq:zeros}
u^\pm_j(r)-1 \quad\mbox{and}\quad u^\pm_{2k+1-j}(r)-1 \quad\mbox{have exactly $j$ zeros for every $j=1,\dots,k$.} 
\end{equation}
Clearly, being oscillating, those solutions are non-constant. In conclusion, by Lemma \ref{lem:hat_f}, $u^\pm_j$ are solutions of \eqref{eq:main} satisfying the conditions (i), (ii) and (iii) of Theorem \ref{thm:main}. 
\end{proof}

\section*{Acknowledgments}
\noindent This research was partially supported by the INdAM - GNAMPA Project 2019 ``Il modello di Born-Infeld per l'elettromagnetismo nonlineare: esistenza, regolarit\`a e molteplicit\`a di soluzioni". 

\bibliographystyle{alpha}
\bibliography{biblio}
\end{document}